\documentclass[11pt, a4paper]{article}

\usepackage{amsmath,amssymb,amsthm}
\usepackage{url,eurosym}
\usepackage{graphicx,color} 
\usepackage{subcaption}
\usepackage{tabularx}

\newtheorem{theorem}{Theorem}[section]
\newtheorem{lemma}[theorem]{Lemma}

\newtheorem{proposition}[theorem]{Proposition}

\newcommand{\rr}{\mathbb{R}}
\newcommand{\st}{\mathop{\rm s.t.}\nolimits}

\newcommand{\beqar}{\begin{eqnarray}}
\newcommand{\eeqar}{\end{eqnarray}}
\newcommand{\beqarno}{\begin{eqnarray*}}
\newcommand{\eeqarno}{\end{eqnarray*}}
\newcommand{\ba}[1]{\begin{array}{#1}}
\newcommand{\ea}{\end{array}}

\usepackage[T1]{fontenc}
\usepackage[utf8]{inputenc} % or \usepackage[utf8x]{inputenc} for more characters
\usepackage{graphicx,color} % for pdf, bitmapped graphics files
\usepackage{epsfig} % for postscript graphics files
\usepackage{times} % assumes new font selection scheme installed
\usepackage{amsmath} % assumes amsmath package installed
\usepackage{amssymb}  % assumes amsmath package installed
\usepackage{caption}
\usepackage{subcaption}
\usepackage{stackrel}

\usepackage{url}
\usepackage{multirow}
\usepackage{cite}

\usepackage{algorithm}
\usepackage{pgf}
\usepackage{pgfplots}
\usepgfplotslibrary{fillbetween}
\usepackage{tikz}
\usetikzlibrary{arrows,automata}
\usetikzlibrary{intersections}
\usetikzlibrary{calc}
\usetikzlibrary{shapes}
\usetikzlibrary{positioning}
\usetikzlibrary{decorations.text}
\pgfdeclarelayer{background}
\pgfdeclarelayer{foreground}
\pgfsetlayers{background,main,foreground}
\usetikzlibrary{calc,patterns,decorations.pathmorphing,decorations.markings}

\definecolor{mycol}{RGB}{19,48,128}
\usepackage{hyperref} 
\hypersetup{
	colorlinks = true,
	citecolor = mycol,
	linkcolor = mycol,
	urlcolor = black
}

\usepackage{enumitem}

\newcommand{\DD}{{\mathcal D}}
\newcommand{\PP}{{\mathcal P}}
\newcommand{\KK}{{\mathcal K}}

\newcommand{\XX}{{\mathcal X}}

\newtheorem{assumption}{Assumption}

\begin{document}

	\title{Learning Approximate Solutions to Multiparametric Generalized Nash Equilibrium Problems}	

    \author{Alberto Bemporad and Tatiana Tatarenko 
    {\renewcommand{\thefootnote}{}\thanks{A. Bemporad is with the IMT School for Advanced Studies, Piazza San Francesco 19, Lucca, Italy. Email: \texttt{\scriptsize alberto.bemporad@imtlucca.it}. T. Tatarenko is with the Department of Control Theory and Intelligent Systems, TU Darmstadt, Germany. E-mails: \texttt{\scriptsize tatiana.tatarenko@tu-darmstadt.de}.   
    This work was funded by the European Union (ERC Advanced Research Grant COMPACT, No. 101141351). Views and opinions expressed are however those of the authors only and do not necessarily reflect those of the European Union or the European Research Council. Neither the European Union nor the granting authority can be held responsible for them. The work was also funded by the Deutsche Forschungsgemeinschaft (DFG, German Research Foundation). Project number: 528033031.
    }}}
	
\maketitle
\thispagestyle{empty}
	
%%%%%%%%%%%%%%%%%%%%%%%%%%%%%%%%%%%%%%%%%%%%%%%%%%%%%%%%%%%%%%%%%%%%%%%%%%%%%%%%
\begin{abstract}
We propose a learning-based approach for approximating solution mappings of multiparametric generalized Nash equilibrium problems (GNEPs) with coupling in both objectives and constraints. Rather than solving a standard regression problem on a training dataset of GNEP solutions, which are expensive and possibly difficult to collect, we use the Nikaido-Isoda (NI) gap function as a training loss, which requires only best-response data. To avoid bilevel optimization, a value-function surrogate approximates each agent's optimal best-response cost and is substituted into the NI loss, yielding a single-level learning problem. Learning approximate solutions to standard multiparametric programming problems is a special case of the approach. We also establish new sufficient conditions for the existence of a continuous parametric variational GNE selection under a strong variational stability assumption that generalizes strong monotonicity. The trained neural network delivers approximate GNE solutions with speedups of several orders of magnitude over online solvers. Numerical experiments on different problem classes confirm the effectiveness of the approach. A Python library is available at \url{https://github.com/bemporad/mpfit}.
\end{abstract}

\textbf{Keywords:} Generalized Nash equilibrium problems, multiparametric programming, game-theoretic optimization, learning-based optimization.

\section{Introduction}\label{sec:introduction}
Research into the theoretical foundations of parametric convex optimization problems began in the 1960s~\cite{MR64} and saw significant development during the 1980s~\cite{Fia83}. In the 1970s, initial efforts focused on deriving explicit solutions to multiparametric linear programs (mpLPs)~\cite{GN72}. In the early 2000s, motivated by the need for explicit control laws in model predictive control, multiparametric quadratic programming (mpQP) was extensively investigated~\cite{BMDP02a}, and efficient algorithms were developed for solving mpQP~\cite{TJB03,PS10,GBN11} as well as mpLP~\cite{BBM03} problems.

Motivated by these developments, we next turn to an analogous research direction in multi-agent optimization with strategically interacting agents, namely, game-theoretic optimization. This area has gained significant attention due to the increasing prevalence of large-scale interconnected systems in which multiple decision-makers interact strategically, such as energy networks, communication systems, and economic markets. In such settings, the behavior of each agent both influences and is influenced by the actions of others, making game-theoretic models a natural framework for analysis and design.

At the same time, computing equilibria in such problems remains computationally demanding, particularly in large-scale or real-time settings. This challenge is especially pronounced in game-theoretic receding horizon and model predictive control (MPC) frameworks, where a generalized Nash equilibrium problem (GNEP) that depends on a vector of parameters, such as states and reference signals that change over time, must be solved at each sampling instant~\cite{HBLD22,BG25,LK24,zhu2023sqp_gne,Bem26}. While such problems can often be formulated as parametric equilibrium problems, solving them online may be prohibitive due to stringent time constraints and the possible absence of some structural assumptions, such as monotonicity and smoothness properties of the game, under which distributed optimization methods are provably efficient~\cite{BIANCHI2022, TatKamTAC19, TatNedShi21}. This motivates the development of parametric representations and the corresponding learning-based approaches that approximate solution mappings offline and enable fast online evaluation.

While parametric solution mappings are well understood in single-agent convex optimization, and sensitivity analysis provides local regularity results for equilibrium problems, the explicit representation and learning of solution mappings for generalized Nash equilibrium problems remain largely unexplored. 
An exception is the
case of linear-quadratic (LQ) games, where the cost functions are convex and quadratic and the constraints are linear, for which explicit LQ-GNE maps defined over polyhedral regions in the parameter space can be derived~\cite{HB26}.
A drawback of such exact characterizations is that they are restricted to small-scale problems, due to the rapid exponential growth of the number of regions of the solution map with the number of decision variables and parameters.

%\subsection{Contribution}
In this paper, we address the problem of approximating solution mappings of a rather general class of multiparametric GNEPs with coupling in both agents' objective functions and constraints. To be able to address the problem in such a very general setting, 
due to the possible absence of an exact explicit characterization of the solution, we formulate the GNEP as the minimization of the associated Nikaido-Isoda (NI) gap function. The NI function is employed as a loss function in a multivariate regression problem, which is addressed by training a feedforward neural network that approximates a parametric GNEP solution mapping. 

Our approach differs from standard function regression, which would require a dataset of GNEP solutions, each computed for a different fixed parameter value, which would be expensive and possibly difficult to collect. Instead, we train a solution model using only local best responses and cost function evaluations. 
Our approach can also be applied to find approximate solutions to standard multiparametric optimization problems, by simply considering a single agent or, equivalently, $N$ agents with decoupled cost functions and no shared constraints; in such a case, 
which involves no game, preparing the dataset has zero cost, as no optimization is required. 

The paper is organized as follows. In Section~\ref{sec:problem_formulation} we state the parametric GNE problem we consider. Section~\ref{sec:learning_problem} provides details on modeling the equilibrium solution map, constructing the training set, and defining the NI-based loss function for the learning process. Section~\ref{sec:structured} clarifies how the sensitivity analysis of parametric solutions in a structured class of games motivates the chosen continuous models for both the value-function and solution models. Section~\ref{sec:single-agent} discusses the special case of standard (single-agent) multiparametric optimization problems. Section~\ref{sec:examples} presents numerical experiments on linear-quadratic, quadratically-constrained quadratic, and nonlinear GNEPs, as well as standard multiparametric programming problems. Section~\ref{sec:conclusions} concludes the paper.

\subsection{Notation}
We denote by $\rr^n$ the $n$-dimensional Euclidean space and by $\rr^{m\times n}$ the set of real $m\times n$ matrices. For a vector $v\in\rr^n$, $\|v\|$ denotes the Euclidean norm and $\|v\|_1$ the $\ell_1$ norm. Given $u,v\in\rr^n$, $\langle u,v\rangle = u^\top v$ denotes their inner product. We use the subscript $_k$ to denote the $k$-th element of a set, $_i$ to denote objects related to the $i$-th agent, and $_{-i}$ to denote those related to all agents except agent $i$.

\section{Parametric Generalized Nash Equilibrium Problem}
\label{sec:problem_formulation}
We consider a {\it parametric} generalized Nash equilibrium problem involving $N$ agents (players), each minimizing an individual cost function. Let $x_i\in\rr^{n_i}$ denote the decision variable of agent $i$, with $\sum_{i=1}^N n_i = n_x$, and let
\[
J_i(x_i,x_{-i},p): \rr^{n_x}\times \rr^{n_p}\to \rr
\]
denote the cost function of agent $i$, where $p\in\PP\subseteq\rr^{n_p}$ is a parameter vector and $x_{-i}\in\rr^{n_x-n_i}$ collects the remaining decision variables.

For each agent $i$, we define the {\it best-response} map $\bar x_i(x_{-i},p)$ as a solution to the parametric optimization problem
\begin{subequations}
\begin{equation}
\ba{rcrl}
\bar x_i(x_{-i},p) &\in& \arg\min_{x_i} & J_i(x_i,x_{-i},p) \\
&& \st & g(x,p)\leq 0\\
&&& h(x,p)=0.
\label{eq:best_response}
\ea
\end{equation}
where $g:\rr^{n_x}\times\rr^{n_p}\to\rr^{n_g}$ and $h:\rr^{n_x}\times\rr^{n_p}\to\rr^{n_h}$ describe the coupling constraints {\it shared} by all agents, $x=(x_i,x_{-i})$ denotes the joint decision vector, and  $\bar J(x_{-i},p)$ is the value function of the best-response problem. We denote by
\begin{equation}
  \label{eq:value-function}
  \bar J_i(x_{-i},p)=J_i(\bar x_i(x_{-i},p),x_{-i},p)  
\end{equation} 
the value function of the best-response problem~\eqref{eq:best_response} for agent $i$.
Any local constraints of the form $x_i\in\XX_i(p)$, such as bounds on the individual decisions, are assumed to be incorporated into the constraints $g(x,p)\leq 0$ and $h(x,p)=0$. 
Note that, while each agent's problem~\eqref{eq:best_response} may depend only on a subvector $p_i$ of parameters in $p$, for maximum generality we consider $p_i=p$, $\forall i=1,\ldots,N$.

We assume a full-information setting in which all functions and constraint sets involved in~\eqref{eq:best_response} are known to a central entity responsible for solving the generalized Nash equilibrium problem. Thus, its objective is to determine a joint strategy $x^\star(p)=(x_1^\star(p),\ldots,x_N^\star(p))$ such that, for every $p\in\PP$,
\begin{equation}
x_i^\star(p)=\bar x_i(x_{-i}^\star(p),p), \qquad \forall i=1,\ldots,N.
\label{eq:GNE}
\end{equation}
\label{eq:GNEP-parametric}%
\end{subequations}
Under standard convexity and regularity assumptions, for instance, convexity and differentiability of $J_i$ with respect to $x_i$, convexity of the constraint set, and suitable constraint qualification conditions, the above problem can be reformulated as a quasi-variational inequality~\cite{facchinei2007generalized}. In this setting, several sufficient conditions for the existence of a solution are available. However, provably efficient numerical methods typically require additional structure, most notably monotonicity of the game which, together with convexity of shared constraints, allows for focusing on a subclass of solutions characterized by a specifically defined variational inequality~\cite{facchinei2007generalized}.

In this paper, our objective is to compute, for every $p\in\PP$, an approximate equilibrium $\hat x(p)\approx x^\star(p)$ by means of a suitably designed machine learning procedure, without imposing convexity or monotonicity assumptions on the underlying parametric game. 

\section{Learning Problem}
\label{sec:learning_problem}
The posed learning problem~\eqref{eq:GNEP-parametric} could be addressed as a standard multivariate function regression problem, such as training a feedforward neural network $\hat x$ on GNE data. However, this approach has two main drawbacks: first, a large number of GNEs must be computed for different values of $p$ to get a sufficiently large training dataset, which can be computationally expensive; second, multiple GNEs may result for the same $p$, and it is not obvious which one to pick to create the dataset,
unless one restricts to very specific GNE problems, where uniqueness of solution can be guaranteed. 
To overcome these issues, as well as to take into account that $\hat x(p)$ must represent a GNE solution, we will leverage the idea to learn solutions by means of the Nikaido-Isoda gap function~\cite{NikaidoIsoda1955} based on best-response data, which are much cheaper to compute.
It is well known and  straightforward to prove that solutions $x^\star(p)$ to the GNEP~\eqref{eq:GNEP-parametric} minimize the following non-negative Nikaido-Isoda (NI) gap function
\begin{equation}
    NI(x,p) = \sum_{i=1}^N\left( J_i(x_i,x_{-i},p) - J_i(\bar x_i(x_{-i},p),x_{-i},p)\right)
    \label{eq:NI-function}
\end{equation}
over the shared constraint set $\{x\,:\, g(x,p)\leq 0, \, h(x,p)= 0\}$.
Moreover, any minimizer, i.e., a zero point, of the function above is a GNE $x^\star(p)$.
In other words, $x^\star(p)$ solves the problem~\eqref{eq:GNEP-parametric} if and only if $NI(x^\star(p),p)=0$, along with $g(x^\star(p),p)\leq 0$ and $h(x^\star(p),p)=0$.

\subsection{Data sets and parametric models}
We define the training dataset by the collection of $M$ tuples 
\begin{equation}
    \DD=\{(x_k,p_k,\bar J_{k})\}, \,\, k=1,\ldots,M
    \label{eq:dataset}
\end{equation}
where $\bar J_{k,i}=\bar J_i(x_{-i,k},p_k)$ is the value-function of the 
best-response problem~\eqref{eq:best_response} for $x_{-i}=x_{-i,k}$ and $p=p_k$,
$\bar J_k\in\rr^N$.
We assume that $\bar J_{k}$ is defined for all $k$; this is not a restriction, as we simply exclude values $p_k$ from the dataset for which the feasible set $\{x\in\rr^{n_x}:\ g(x,p_k)\leq 0, \, h(x,p_k)= 0\}$ is empty or an agent's problem is unbounded.

To approach the learning problem, we consider two different parametric models:
\begin{enumerate}
\item the {\it value-function model} $\hat J_i(x_{-i},p,\theta_{1i})$, 
$\hat J_i:\rr^{n_x-n_i}\times\PP\to\rr$ for any $\theta_{1i}\in\rr^{n_{\theta_{1i}}}$, where  $\theta_{1i}$ is the model parameter vector to be defined;
\item the {\it GNE model} $\hat x(p,\theta_2)$, where $\theta_2$ is a vector of model parameters to be defined and $\hat x:\PP\to\rr^{n_x}$ for any $\theta_2\in\rr^{n_{\theta_2}}$.
\end{enumerate}
In this paper, we will consider feedforward neural networks as parametric models, but other classes of models can be considered as well. The choice of the class of models should be guided by the properties of the parametric GNE solution, such as continuity, smoothness, etc. More details on the model choice in games possessing a specific structure are provided in Section~\ref{sec:structured}.

\subsection{Optimization problem}
To solve the parametric GNEP~\eqref{eq:GNEP-parametric}
via the NI function~\eqref{eq:NI-function}, consider the following problem:
\begin{equation}
    \begin{aligned}
    \min_{\theta_2}\ &\frac{1}{M}\sum_{k=1}^M \sum_{i=1}^N \nu_i(p_k,\theta_2) \\ 
    \st~& g(\hat x(p_k,\theta_2),p_k)\leq 0\\
    & h(\hat x(p_k,\theta_2),p_k)=0
    \end{aligned}
\label{eq:NI-based-learning}
\end{equation}
where the vector $\nu(p_k,\theta_2)\in\rr^N$ collects the NI function values 
\begin{equation}
\nu_i(p_k,\theta_2) = J_i(\hat x_i(p_k,\theta_2),\hat x_{-i}(p_k,\theta_2),p_{k}) - 
\bar J_i(\hat x_{-i}(p_k,\theta_2),p_k)
\label{eq:NI-term}
\end{equation}
for each agent $i=1,\ldots,N$ at the $k$-th estimated GNE.
Due to the fact that $\bar J_i(x_{-i},p)$ is a solution to the local optimization problem~\eqref{eq:best_response}, the direct NI-based formulation~\eqref{eq:NI-based-learning} requires solving a bilevel optimization problem, subject to the coupling constraints imposed for all $k=1,\ldots,M$, which is computationally expensive.

We avoid bilevel optimization by replacing $\bar J_i(x_{-i},p)$ with the value-function model $\hat J_i(x_{-i},p,\theta_{1i})$, which is obtained by solving the following $N$ learning problems
\begin{equation}
    \theta_{1i}^\star = \arg\min \frac{1}{M}\sum_{k=1}^M\left\|\hat J_i(x_{-i,k},p_k,\theta_{1i})-\bar J_{k,i}\right\|_2^2+r_{1i}(\theta_{1i})
\label{eq:value-function-learning}
\end{equation}
where $\bar J_{k,i}$ is an element of the data set~\eqref{eq:dataset} %(see~\eqref{eq:v-f}) 
and $r_{1i}(\theta_{1i})$ is a regularization term for the parameters of the value-function model, such as $r_{1i}(\theta_{1i}) = \rho_{1i}\|\theta_{1i}\|_2^2 + \tau_{1i}\|\theta_{1i}\|_1$. Accordingly, we define the following {\it approximate} NI function values
\begin{equation}
\begin{aligned}
\hat \nu_i(p_k,\theta_2) =&J_i(\hat x_i(p_k,\theta_2),\hat x_{-i}(p_k,\theta_2),p_{k}) - \\
&\hat J_i(\hat x_{-i}(p_k,\theta_2),p_k,\theta_{1i}^\star)
\end{aligned}
\label{eq:NI-term-approx}
\end{equation}
to replace the original NI function values $\nu_i(p_k,\theta_2)$ in~\eqref{eq:NI-term}.

\subsection{Constraint relaxation}
In order to avoid handling constraints in~\eqref{eq:NI-based-learning}, let us
rephrase them as follows:
\begin{equation}
    \begin{aligned}
    \max_{k\in\KK}&\left(\max_{j=1,\ldots,n_g}(\max(g_j(\hat x(p_k,\theta_2),p_k),0)),\right.\\&\left.\max_{t=1,\ldots,n_h} (|h_t(\hat x(p_k,\theta_2),p_k)|)\right)=0.
    \end{aligned}
\label{eq:max-constraint}
\end{equation}
Then, as suggested in~\cite{Bem26b}, we can penalize the violation of~\eqref{eq:max-constraint}
via the additional loss term (cf.~\cite{Nes05})
\begin{equation}
    \begin{aligned}
    \ell^{\,c}_{\beta,\gamma}(\theta_2)=\frac{\beta}{\gamma}\log\Big(&\sum_{k\in\KK}\Big(\sum_{j=1}^{n_g}
        e^{\gamma\max(g_j(\hat x(p_k,\theta_2),p_k),0)}\\
    &+\sum_{t=1}^{n_h}e^{\gamma |h_t(\hat x(p_k,\theta_2),p_k)|}\Big)
    \end{aligned}
\label{eq:smooth-max}
\end{equation}
where $\beta>0$ is a weighting parameter and $\gamma>0$ is a smoothing parameter, $\beta,\gamma\gg 1$.
Note that, as $\gamma\to\infty$, the term in~\eqref{eq:smooth-max} converges to the left-hand side expression in~\eqref{eq:max-constraint}. 

Taking into account the proposed reformulations of both NI-based loss function and constraints, the GNE learning problem can be formulated as the following unconstrained nonlinear optimization problem:
\begin{equation}
    \begin{aligned}
    \theta_2^\star = \arg\min_{\theta_2} \,& \ell^{\,c}_{\beta,\gamma}(\theta_2) + r_2(\theta_2) + \frac{1}{M}\sum_{k=1}^M \ell_{NI}(\hat \nu(p_k,\theta_2))
    \end{aligned}
    \label{eq:GNE-learning-problem}
\end{equation}
where $r_2(\theta_2)$ is a further regularization term of the GNE model parameters and $\ell_{NI}:\rr^N\to\rr$ is a loss function constructed based on the representation $\hat\nu=(\hat \nu_1,\ldots,\hat \nu_N)$ as defined in~\eqref{eq:NI-term-approx} to approximate the original NI gap function at each data point $k$. Possible choices of $\ell_{NI}$ are:
\begin{subequations}
\begin{equation}
    \ell_{NI}(\hat\nu) = \sum_{i=1}^N \hat \nu_i
    \label{eq:NI-loss-sum}
\end{equation}
similarly to the NI-based loss in~\eqref{eq:NI-based-learning} or
\begin{equation}
    \ell_{NI}(\hat \nu) = \sum_{i=1}^N \max\left(\hat \nu_i,0\right)
\label{eq:NI-loss-max,0}
\end{equation}
or
the following smooth approximation of $\max(\hat\nu_i,0)$ 
\begin{equation}
    \ell_{NI}(\hat \nu) = \frac{1}{2}\sum_{i=1}^N \left(\hat \nu_i + \sqrt{\left(\hat \nu_i\right)^2+\epsilon}\right)
    \label{eq:NI-loss-soft-max,0}
    \end{equation}
\end{subequations}
which is intended to ease optimization, where $\epsilon$ is a smoothing parameter, $0<\epsilon\ll 1$.
Note that~\eqref{eq:NI-loss-max,0} only penalizes the positive parts of the NI approximation function values $\hat \nu_i$, which prevents the nonlinear optimizer of~\eqref{eq:GNE-learning-problem} from attempting to super-optimize the agents' objectives $J_i$ at the surrogate GNEs $\hat x_i(p_k,\theta_2)$ by exploiting imperfect values of $\hat J_i$.

We remark that a solution $\theta_2^\star$ to the learning problem~\eqref{eq:GNE-learning-problem} provides an estimate $\hat x(p,\theta_2^\star)$ even if a GNE does not exist for the parameter value $p$. In this case, the learned model $\hat x(p,\theta_2^\star)$ provides a decision allocation that minimizes the aggregate equilibrium gap and constraint violation, with a trade-off between the two terms that can be tuned via the weighting parameter $\beta$ in~\eqref{eq:smooth-max}.

If any constraint violation is undesired, {\it after} solving the posed learning problem we can cascade the evaluation of the model function with a projection step that enforces such constraints at prediction time, by solving the following optimization problem for a given $p$:
\begin{equation}
\ba{rcrl}
\hat x^{\rm proj}(p,\theta_2^\star) &\in& \arg\min_{x} & \|x-\hat x(p,\theta_2^\star)\|_2^2 \\
&& \st & g(x,p)\leq 0\\
&&& h(x,p)=0.
\label{eq:projection}
\ea
\end{equation}
where $g$, $h$ are the constraint functions in~\eqref{eq:best_response}, 
or a subset of them if only a subset of constraints is to be enforced.
The projection step may be computationally expensive, but should still be significantly faster than solving the original GNEP~\eqref{eq:GNEP-parametric}. 

For the particular case of having only box constraints in~\eqref{eq:best_response}, the projection~\eqref{eq:projection} corresponds to a simple clipping operation. In such a case, an alternative approach to enforce the box constraints 
{\it during} the learning problem is to design the surrogate GNE model $\hat x(p,\theta_2^\star)$ to satisfy the box constraints by adding a saturation function at the model output, such as the smooth one suggested in~\cite[Section 3.B]{Bem25}.

\section{Model Choice in Structured Games}\label{sec:structured}
To approach the posed learning problem, we have implicitly assumed that $\hat J_i(x_{-i}$, $p$, $\theta_{1i})$ is continuous in $x_{-i}$ and $p$, and the GNE model $\hat x(p,\theta_2)$ is continuous in $p$, in particular when feedforward neural networks with continuous activation functions are used as parametric models. In this section, we demonstrate that, under some additional assumptions on the parametric GNEP under consideration, this choice is well justified.

\subsection{Value-function model}
It is well known that the scalar value function $f(p) = \min_{x}\{\phi(x,p)\,|\,\psi(x,p)\le 0\}$ is convex and continuous provided that both functions $\phi: \rr^{n_x}\times\rr^{n_p}$ and $\psi: \rr^{n_x}\times\rr^{n_p}\to\rr^m$ are convex and continuous in $(x,p)$~\cite{MR64}. This result immediately justifies a continuous and convex value function model $\hat J_i(x_{-i}, p,\theta_{1i})$ in $x_{-i}$ and $p$ for each $i$, if $J_i$, $g_i$, and $h_i$ are convex in $(x,p)$. Indeed, according to our construction of the learning problem, see in particular~\eqref{eq:value-function-learning}, $\hat J_i$ should approximate the value function~\eqref{eq:value-function} of the local best-response problem of the agent $i$ defined in~\eqref{eq:best_response}. 

If the joint convexity assumption is not fulfilled, but rather a local convexity holds in terms of cost functions, i.e., each $J_i$ is convex in $x_i$ for any fixed $x_{-i}$ and $p$, the result of Proposition~1 in~\cite{Song2024GeneralizedDerivatives} can be applied. In this case, a continuous model for each $\hat J_i(x_{-i}, p,\theta_{1i})$ in $x_{-i}$ and $p$ is justified provided that:
\begin{enumerate}
    \item [$i$)] the functions $J_i$, $g_i$, and $h_i$ 
    are continuously differentiable over the whole domain;
    \item [$ii$)] the best response optimization
    problem~\eqref{eq:best_response} has a unique optimal solution;
    \item [$iii$)] for each $p\in\mathcal{P}$, the constraint function $g_i$ is convex in $x$, $h_i$ is affine in $x$,  whereas the cost function $J_i$ is convex merely in $x_i$;
    \item [$iv$)] $\bar J_i(x_{-i},p)$ is finite;
    \item [$v$)] for each $p\in\mathcal{P}$, the strong Slater's condition holds for the constraint set in the problem~\eqref{eq:best_response}.
 \end{enumerate}

\subsection{GNE model}
As it has been mentioned above, a GNEP is generally hard to solve, whereas under specific assumptions on the game structure, some useful solution properties can be formulated which further help to make conclusions regarding the form of a parametric solution to~\eqref{eq:GNEP-parametric}. That is why we focus here on a jointly convex GNEP, where the coupling constraints are convex and shared, and each cost function $J_i$ is convex and differentiable in $x_i$ for any fixed $x_{-i}$ and $p$. In this case, the set of GNE solutions contains, in general, a distinguished subset of solutions (possibly empty in pathological cases) called variational generalized Nash equilibria (v-GNE). These equilibria are characterized as the solutions to the variational inequality $VI(F(\cdot,p),X(p))$, where
\[
F(x,p)=\left[\frac{\partial J_1(x,p)}{\partial x_1},\ldots,\frac{\partial J_N(x,p)}{\partial x_N}\right]^\top
\]
is the pseudo-gradient mapping, and
\[
X(p)=\{x:\; g(x,p)\le 0,\; h(x,p)=0\}.
\]
The continuity property of the solution maps to the variational inequality above can be obtained by means of sensitivity analysis, where the behavior of the solution set is studied in a neighborhood of some reference parameter point $p\in\mathcal{P}$.  
Thus, to justify the use of a continuous GNE model in our learning problem, we rely on results from the sensitivity analysis of variational inequalities applied to $VI(F(\cdot,p),X(p))$, see, e.g.,~\cite{Dafermos,FaccPang1,HanPang2024}. In particular, Section~5 of~\cite{FaccPang1} is devoted to the sensitivity analysis of general variational inequalities. The theory developed there is based on topological properties of the so-called natural and normal maps, which are difficult to analyze in practice. By contrast,~\cite{Dafermos} provides sufficient conditions for continuity of the solution map under assumptions including uniform strong monotonicity and Lipschitz continuity of $F(\cdot,p)$ with respect to $x$, for each $p\in\mathcal P$. More recently,~\cite{HanPang2024} studies the case of merely monotone mappings, under the existence of directional derivatives with respect to the parameter and affine parametric constraints. In that setting, the authors establish necessary and sufficient conditions for the existence of a continuous selection of solutions to the corresponding monotone variational inequality. In particular, these conditions require uniqueness of the solution for each $p\in\mathcal P$.

Motivated by the above discussion and to extend the validity of the learning method proposed in the previous sections, here we derive new sufficient conditions for the existence of a continuous selection of parametric v-GNE solutions for the family of games
\[
\Gamma_p=\Gamma\bigl(N,\{J_i(\cdot,p)\},X(p)\bigr), \qquad p\in\mathcal P,
\]
under assumptions that relax monotonicity. Clearly, if a continuous selection of v-GNE solutions exists, then a continuous GNE model $\hat x(p,\theta_2)$ exists as well and can be determined by the learning procedure.

\begin{assumption}\label{assum:strVS}
For each $p\in\mathcal{P}$ the game $\Gamma_p$ is jointly convex and admits a v-GNE solution $x^\star(p)$. Moreover, the pseudo-gradient $F(x,p)$ is continuous in $x$ and $p$ and is strongly variationally stable (SVS) with respect to $x^\star(p)$ with a uniform constant $\nu$, i.e.
\begin{align}\label{eq:strVS}
  \langle F(x,p), x-x^\star(p)\rangle \ge \nu\|x-x^\star(p)\|^2, \,\,\, \forall x\in X(p), p\in\mathcal{P}.  
\end{align}
\end{assumption}

Assumption~\ref{assum:strVS} is more general than the strong monotonicity assumption considered, for example, in~\cite{Dafermos}. Some examples of non-monotone games satisfying Assumption~\ref{assum:strVS} are provided in~\cite{TatKamStrVS}. Any strongly monotone game possesses a strongly variationally stable pseudo-gradient. Finally, note that if Assumption~\ref{assum:strVS} holds, the corresponding variational solution $x^\star(p)$ is necessarily unique in $\Gamma_p$.

\begin{assumption}\label{assum:Proj}
For any fixed $ p\in \mathcal P$ there exist constants $\delta>0$ and $c_p>0$ such that for all $q\in \mathcal P\cap B_\delta(p)$  the following holds:
\[
\bigl\|x^\star(q)-\Pi_{X(p)}(x^\star(q))\bigr\| \le c_p\|p-q\|.
\]
\end{assumption}
The next lemma provides a sufficient condition under which Assumption~\ref{assum:Proj} holds. 
\begin{lemma}\label{lem:Assum}
Let
\begin{enumerate}
    \item[1)]Assumption~\ref{assum:strVS} hold. Moreover, let 
for every $ p\in\mathcal P$, a neighborhood $U_{p}\subset \mathcal P$, a constant $M_y>0$, and a mapping $y:U_{p}\to\mathbb R^n$ exist such that $y(q)\in X(q)$ and  $\|y(q)\|\le M_y$,  for all $ q\in U_{p}$\footnote{This condition holds, in particular, in the case of $X(p)= \{ x: \, A x \le b( p)\}$ with a continuous parametric vector-function $b(p)$.}.
\item[2)]The set $X(p)$ be given by the inequality constraints with functions that are Lipschitz continuous over the set $\mathcal{P}$, i.e. $X(p) = \{x \,:\, g(x,p)\le 0 \}$, $g = (g_1,\ldots, g_m)$,
and  $|g_i(x,p) - g_i(x,q)|\le L_i(x)\|p-q\|$,
where $L_i(x)$ is continuous in $x$. Moreover, let the Slater's constraint qualification condition hold for each $X(p)$.
\end{enumerate}
Then Assumption~\ref{assum:Proj} holds.
\end{lemma}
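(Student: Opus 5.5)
The idea is to combine two facts. First, the v-GNE $x^\star(q)$ stays bounded for $q$ near $p$; second, a Hoffman/Robinson-type error bound for $X(p)$ converts a small constraint violation into a small distance to $X(p)$.

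\emph{Step 1: uniform boundedness of $x^\star(q)$.} Fix $p$ and take $q\in U_p$. Apply the strong variational stability~\eqref{eq:strVS} at the parameter $q$ with the feasible point $x=y(q)\in X(q)$. This gives
\[
\nu\|y(q)-x^\star(q)\|^2\le \langle F(y(q),q),\,y(q)-x^\star(q)\rangle \le \|F(y(q),q)\|\,\|y(q)-x^\star(q)\|,
\]
so that $\|x^\star(q)-y(q)\|\le \|F(y(q),q)\|/\nu$. Shrink $U_p$ to a compact neighborhood $\bar B_{\delta}(p)\cap\mathcal P$. Continuity of $F$ on the compact set $\bar B_{M_y}(0)\times(\bar B_\delta(p)\cap\mathcal P)$ then bounds $\|F(y(q),q)\|$ by a constant $C_F$. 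Hence $\|x^\star(q)\|\le R:=M_y+C_F/\nu$ for all $q$ in this neighborhood. (If $\mathcal P$ is not closed, I would use continuity of $F$ on $\rr^{n_x}\times\rr^{n_p}$ near $p$, which is how the continuity assumption should be read.)

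\emph{Step 2: constraint violation of $x^\star(q)$ in $X(p)$.} Since $g(x^\star(q),q)\le 0$, the Lipschitz condition gives
\[
g_i(x^\star(q),p)\le g_i(x^\star(q),q)+L_i(x^\star(q))\|p-q\|\le \bar L\|p-q\|,
\qquad \bar L:=\max_i\max_{\|x\|\le R}L_i(x),
\]
which is finite by continuity of $L_i$ on the compact ball $\bar B_R(0)$. Thus $\max_i [g_i(x^\star(q),p)]_+\le \bar L\|p-q\|$, and $x^\star(q)$ lies in the bounded set $\bar B_R(0)$.

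\emph{Step 3: error bound for $X(p)$.} This is the main step. Joint convexity makes each $g_i(\cdot,p)$ convex, and Slater's condition gives a point $\hat x$ with $\max_i g_i(\hat x,p)=-s<0$. I would use Robinson's convex error bound on bounded sets, proved directly as follows. Let $x$ satisfy $\|x\|\le R$ and $t:=\max_i[g_i(x,p)]_+$. Define $z=\lambda\hat x+(1-\lambda)x$ with $\lambda=t/(t+s)$. By convexity, $g_i(z,p)\le -\lambda s+(1-\lambda)t=0$, so $z\in X(p)$. Therefore
\[
\|x-\Pi_{X(p)}(x)\|\le\|x-z\|=\lambda\|x-\hat x\|\le \frac{R+\|\hat x\|}{s}\,t .
\]
Combining this with Step 2 yields Assumption~\ref{assum:Proj} with $c_p=\bar L(R+\|\hat x\|)/s$ and the $\delta$ from Step 1.

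The obstacle I expect is bookkeeping rather than a deep difficulty. One must make sure the neighborhood is chosen so that $F$ is bounded there, which is the compactness/continuity issue in Step 1. One must also use the fact that the Slater constant is needed only at the fixed $p$, not uniformly in $q$; this is what makes the convex-combination argument in Step 3 sufficient.
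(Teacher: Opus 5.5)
Your proof is correct and follows the same route as the paper's: uniform boundedness of $x^\star(q)$ near $p$, then the Lipschitz estimate $[g_i(x^\star(q),p)]_+\le L_i(x^\star(q))\|p-q\|$, then a Slater-based error bound for $X(p)$ on a bounded set. The only differences are that you derive the boundedness explicitly from strong variational stability at the feasible point $y(q)$, which the paper merely asserts, and that you prove the error bound directly via the convex combination with the Slater point, with the explicit constant $(R+\|\hat x\|)/s$, where the paper cites Robinson's bound from Facchinei--Pang.
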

The proof can be found in the Appendix.

\begin{assumption}\label{assum:Proj1}
For any fixed $y\in\rr^{n_x}$ the mapping $p\to\Pi_{X(p)}{y}$ is continuous over $\mathcal{P}$.
\end{assumption}
The assumption above is a standard regularity condition in sensitivity analysis (see, for example, Remark 2.1 and Proposition 2.1 in~\cite{Dafermos} for some sufficient conditions for Assumption~\ref{assum:Proj1} to hold) that allows for taking limits in the parametric projection operators by establishing continuity of the solution map. 

\begin{proposition}\label{prop:cont}
    If Assumptions~\ref{assum:strVS}-\ref{assum:Proj1} hold, then there exists a continuous parametric v-GNE solution map $x^\star(p)$ to the games $\Gamma_p$, $p\in\mathcal{P}$. 
\end{proposition}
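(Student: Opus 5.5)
Since Assumption~\ref{assum:strVS} makes the v-GNE $x^\star(p)$ unique for each $p$, the map $p\mapsto x^\star(p)$ is well defined, and it suffices to show it is continuous at an arbitrary $p\in\mathcal P$. Fix such $p$ and a sequence $q\to p$ in $\mathcal P$. The plan is to compare $x^\star(q)$ with $x^\star(p)$ through the strong variational stability inequality~\eqref{eq:strVS} written at $p$. This requires a point of $X(p)$ near $x^\star(q)$, and Assumption~\ref{assum:Proj} provides one: set $z(q)=\Pi_{X(p)}(x^\star(q))\in X(p)$, which satisfies $\|x^\star(q)-z(q)\|\le c_p\|p-q\|$ for $q\in B_\delta(p)$.

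\textbf{Step 1 (SVS at $p$).} Apply~\eqref{eq:strVS} at $p$ with $x=z(q)$:
\[
\nu\|z(q)-x^\star(p)\|^2\le\langle F(z(q),p),z(q)-x^\star(p)\rangle .
\]

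\textbf{Step 2 (VI at $q$).} Since $x^\star(q)$ solves $VI(F(\cdot,q),X(q))$, we have $\langle F(x^\star(q),q),y-x^\star(q)\rangle\ge0$ for all $y\in X(q)$. Take $y=y(q):=\Pi_{X(q)}(x^\star(p))$. By Assumption~\ref{assum:Proj1}, $y(q)\to\Pi_{X(p)}(x^\star(p))=x^\star(p)$ as $q\to p$. Rewrite the right-hand side of Step~1 as
\[
\langle F(z(q),p)-F(x^\star(q),q),\,z(q)-x^\star(p)\rangle+\langle F(x^\star(q),q),\,z(q)-x^\star(q)\rangle+\langle F(x^\star(q),q),\,x^\star(q)-y(q)\rangle+\langle F(x^\star(q),q),\,y(q)-x^\star(p)\rangle .
\]
The third term is $\le0$ by the VI at $q$. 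The second and fourth terms tend to zero, since $\|z(q)-x^\star(q)\|\le c_p\|p-q\|$, $y(q)\to x^\star(p)$, and $F(x^\star(q),q)$ stays bounded.

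\textbf{Step 3 (boundedness, the main obstacle).} Boundedness of $F(x^\star(q),q)$ and control of the first term both require that $\{x^\star(q)\}$ stay in a compact set as $q\to p$. My first attempt is to apply~\eqref{eq:strVS} at $q$ with $x=y(q)$:
\[
\nu\|y(q)-x^\star(q)\|^2\le\langle F(y(q),q),\,y(q)-x^\star(q)\rangle\le\|F(y(q),q)\|\,\|y(q)-x^\star(q)\| .
\]
This gives $\|y(q)-x^\star(q)\|\le\|F(y(q),q)\|/\nu$. The right-hand side is bounded because $y(q)\to x^\star(p)$, $q\to p$, and $F$ is jointly continuous. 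Hence $x^\star(q)$, and therefore $z(q)$, is bounded, so $F$ is bounded on the relevant compact set. If a bound independent of Assumption~\ref{assum:Proj1} were needed, I would instead use the bounded feasible selection from Lemma~\ref{lem:Assum}; here Assumption~\ref{assum:Proj1} suffices.

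\textbf{Step 4 (conclusion).} Argue by subsequences. Any subsequence of $x^\star(q)$ has a further subsequence converging to some $\bar x$, and then $z(q)\to\bar x$ as well. The first term in Step~2 tends to $\langle F(\bar x,p)-F(\bar x,p),\cdot\rangle=0$ by continuity of $F$ on compacts. Hence $\nu\|\bar x-x^\star(p)\|^2\le0$, so $\bar x=x^\star(p)$. Since every subsequence has a further subsequence converging to the same limit, $x^\star(q)\to x^\star(p)$, which is continuity at $p$. Because $p$ was arbitrary, $p\mapsto x^\star(p)$ is a continuous v-GNE map.
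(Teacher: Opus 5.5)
Your proof is correct and uses the same mechanism as the paper's. You combine the SVS inequality at one parameter with the VI at the other, bridge $X(p)$ and $X(q)$ through $\Pi_{X(p)}(x^\star(q))$ (Assumption~\ref{assum:Proj}) and $\Pi_{X(q)}(x^\star(p))$ (Assumption~\ref{assum:Proj1}), and then pass to limit points.

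The difference is that the roles of $p$ and $q$ are swapped. The paper applies SVS at $q$ with test point $\Pi_{X(q)}(x^\star(p))$ and the VI at $p$ with test point $\Pi_{X(p)}(x^\star(q))$. As a result, $F$ is only evaluated at $x^\star(p)$ or at $\Pi_{X(q)}(x^\star(p))\to x^\star(p)$. You apply SVS at $p$ and the VI at $q$, so $F$ is evaluated at $x^\star(q)$ and $z(q)$, and you need an a priori bound on $x^\star(q)$.

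Your Step~3 inequality is exactly the paper's first inequality~\eqref{eq:Vst1}. Adding the VI at $p$ tested at $z(q)$ to it closes the argument in one pass, so your separate SVS-at-$p$ estimate could be dropped.

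What your version gains is rigor on boundedness. The paper only asserts that $x^\star(q)$ is uniformly bounded, which is not among the proposition's hypotheses; it is guaranteed only under the first condition of Lemma~\ref{lem:Assum}. You derive it from Assumptions~\ref{assum:strVS} and~\ref{assum:Proj1}. Along the paper's route it can also be recovered. Its final estimate has the form $\nu a^2\le C_q a+D_q$, with $a=\|\Pi_{X(q)}(x^\star(p))-x^\star(q)\|$ and $C_q,D_q\to 0$. This even gives $a\to 0$ directly, without any limit-point argument.
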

See the Appendix for the proof.

The structural assumptions imposed on the parametric GNEP in this section are sufficient to ensure well-posedness of the learning problem when continuous surrogate models are employed for the value functions $\hat J_i(x_{-i}, p; \theta_{1i})$, $i=1,\ldots,N$, and for the GNE mapping $x^\star(p,\theta_2)$. At the same time, the learning problem can also be formulated without requiring these assumptions to hold. In that case, one may still fit continuous models to the training data by identifying parameters $\theta^\star_{1i}$ and $\theta^\star_2$ through~\eqref{eq:value-function-learning} and~\eqref{eq:GNE-learning-problem}. The resulting learned GNE model then induces a decision map $\hat x(p,\theta_2^\star)$ that is continuous in the parameter $p$ and attempts to minimize the aggregate unilateral improvement over all players. Hence, it can be interpreted as a best approximate equilibrium defined on an ``almost'' feasible decision set.

\section{Standard multiparametric programming}
\label{sec:single-agent}
In the special case $N=1$, problem~\eqref{eq:GNEP-parametric} becomes a standard multiparametric programming problem:
\begin{equation}
\ba{rcrl}
x^\star(p) &\in& \arg\min_{x} & J(x,p) \\
&& \st & g(x,p)\leq 0\\
&&& h(x,p)=0.
\label{eq:single-agent}
\ea
\end{equation}
The proposed learning approach can be applied to this special case as well, which is of interest in its own right, for example in the context of embedded model predictive control~\cite{Bem21c}.
When $N=1$, the NI function values~\eqref{eq:NI-based-learning} simply become the gap value
\begin{equation}
\nu(p_k,\theta_2) = J(\hat x(p_k,\theta_2),p_{k}) - J(x^\star(p_k),p_k)
\label{eq:NI-term-single-agent}
\end{equation}
which is clearly zero when $\hat x(p_k,\theta_2)$ is an optimal solution to~\eqref{eq:single-agent}. Note that the terms $J(x^\star(p_k),p_k)$ do not depend on the trainable parameters.
In case the loss $\ell_{NI}(\hat\nu) = \hat \nu_i$ is used as in~\eqref{eq:NI-loss-sum}, 
we can directly set $\hat \nu(p_k,\theta_2) = J(\hat x(p_k,\theta_2),p_{k})$ in~\eqref{eq:GNE-learning-problem}, without the need to introduce a value-function model $\hat J$ as in~\eqref{eq:NI-term-approx}. Then, the learning problem~\eqref{eq:GNE-learning-problem} simplifies to the following problem
\begin{equation}
    \begin{aligned}
    \theta_2^\star = \arg\min_{\theta_2} \,& \ell^{\,c}_{\beta,\gamma}(\theta_2) + r_2(\theta_2) + \frac{1}{M}\sum_{k=1}^M J(\hat x(p_k,\theta_2),p_{k}).
    \end{aligned}
    \label{eq:GNE-learning-problem-single-agent}
\end{equation}
Problem~\eqref{eq:GNE-learning-problem-single-agent} is equivalent to the problem
of fitting an optimal solution mapping $\hat x(p_k,\theta_2)$ by jointly minimizing the cost function $J$ for all the parameters $p_k$ in the dataset, under the penalty $\ell^{\,c}_{\beta,\gamma}(\theta_2)$ to promote constraint satisfaction and $\ell_2$-regularization of $\theta_2$. Remarkably, this does not require evaluating any explicit optimal values $\bar J_k$ when preparing the training dataset $\DD$, i.e., we simply collect $M$ samples $p_k$ of the parameter vector and set $\DD=\{p_k\}$, $k=1,\ldots,M$. 

Note that the multi-agent problem~\eqref{eq:best_response} is equivalent to $N$ single-agent problems when the cost functions are decoupled, i.e. $J_i(x,p)=J_i(x_i,p)$ for all $i=1,\ldots,N$, and there are no shared constraints, i.e., 
for each given $p$, each component of $g$ and $h$ depends only on one of the agent subvectors $x_i$ for some $i$. In the absence of a game, the learning approach~\eqref{eq:GNE-learning-problem-single-agent} can be applied to each agent's problem separately. 

\section{Numerical Examples}
\label{sec:examples}
In all the examples we collect datasets as follows: ($i$) we generate $M$ parameter values $p_k$ by Latin hypercube sampling~\cite{MBC79} in a hyper-box $\PP$; ($ii$) we sample uniformly at random $M$ values $x^{\rm ref}_k$ within a hyper-box $\XX$, which is defined by parameter-independent lower and upper bounds on $x$ in~\eqref{eq:best_response}, and project $x^{\rm ref}_k$ onto the feasible set defined by the shared constraints for each $p_k$ by solving a constrained least-distance problem to get feasible values $x_k$; ($iii$) we collect optimal value function data $\bar J_{ik}$ for each agent $i$ and pair $(x_k,p_k)$. The resulting dataset $\DD=\{(x_k,p_k,\bar J_k)\}$, $k=1,\ldots,M$, is used to solve the learning problem. Separate validation $\DD_{\rm val}$ of $M_{\rm val}$ samples and test $\DD_{\rm test}$ datasets of $M_{\rm test}$ samples are collected in the same way. We consider only problems without shared equality constraints.

All numerical tests are run in Python 3.11 with the \texttt{mpfit} package on a MacBook Pro with Apple M4 Max (16 CPU cores) and the \texttt{JAX} library for automatic differentiation. Unless differently specified, the training is performed by using the Adam optimizer~\cite{KB14} with a learning rate of $0.001$ for 1000 epochs, followed by 2000 L-BFGS iterations~\cite{LN89}, from 32 different initial guesses. The resulting model is selected as the one giving the lowest objective on $\DD_{\rm val}$ (i.e., the lowest mean-squared error between the predicted and true values of $J_i$ when training the value-function models, and the lowest NI-based loss when training the GNE model), and we report the results on $\DD_{\rm test}$. We use $\gamma=10$ in all tests, as suggested in~\cite{Bem26b}. 

After training, the prediction model $\hat x(p,\theta_2^\star)$ is cascaded by a clipping function enforcing
saturation between the given lower and upper bounds. The quality of the resulting predictions is assessed by the mean-squared best-response errors MSE$_{\rm BR}$ = $\frac{1}{N_{\rm test}}\sum_{k=1}^{N_{\rm test}}\sum_{i=1}^N\|\hat x_i(p_k,\theta_2^\star) - \bar x_i(\hat x_{-i}(p_k,\theta_2^\star),p_k)\|_2^2$, where $\bar x_i(\hat x_{-i}(p_k,\theta_2^\star),p_k)$ is a solution to the best-response problem of agent $i$ defined in~\eqref{eq:best_response} with $x_{-i}=\hat x_{-i}(p_k,\theta_2^\star)$.  
To prevent best-responses from being undefined due to small constraint violations of $\hat x$ that cannot be recovered, all the shared inequality constraints are relaxed into $g(x)\leq [s\ \ldots\ s]^\top$, where $s$ is a slack variable heavily penalized as $\frac{1}{2}\rho s^2+\rho s$ in the best-response optimization problem~\eqref{eq:best_response}, $\rho=10^6$.
Best responses are computed by using the DAQP solver~\cite{ABA22b} for LQ-GNEs, 
CVXPY~\cite{DB16} to formulate quadratically-constrained quadratic programs (QCQP) 
and Clarabel~\cite{GC24} to solve them, and L-BFGS-B~\cite{BLNZ95} via the \texttt{Jaxopt} library~\cite{JAXopt} for nonlinear GNEs when required.

\subsection{Linear-quadratic GNEPs}
We apply the proposed learning method to the following simple 2-agent strongly-monotone 
linear-quadratic GNEP
defined by
\begin{equation}
\ba{rll}
J_{1} &= 0.995 x_{1}^2 - 0.67 x_{2} x_{1} + \left(-0.84 - 1.3 p_{1} + 1.2 p_{2}\right) x_{1} \\
J_{2} &= 0.56 x_{2}^2 + 0.09 x_{1} x_{2} + \left(0.7 + 1.45 p_{1} + 0.09 p_{2}\right) x_{2} \\[.5em]
\st & {\begin{bmatrix}-0.98 & 0.05 \\ 0.16 & -1.21 \\ 2.22 & 0.39 \\ 1.69 & -1.11 \\ 1.64 & -1.36\end{bmatrix}} x \leq {\small\begin{bmatrix}1.27 \\ 0.68 \\ 0.88 \\ 1 \\ 1.19\end{bmatrix}} + {\begin{bmatrix}0.13 & 0.16 \\ 0.16 & 0.19 \\ 0.14 & 0.16 \\ 0.12 & 0.15 \\ 0.12 & 0.19\end{bmatrix}} p\\
& -1\leq x_1,x_2\leq 1
\ea
\label{eq:GNE_LQ}
\end{equation}
that we want to learn over the parameter set $\PP = \{p\in\rr^2:\ -1\leq p_1,p_2\leq 1\}$.
The pseudo-gradient mapping of the game is strongly monotone. 
We collect a training dataset $\DD$ of $M=1000$ samples, $N_{\rm val}=1000$ validation samples, and $N_{\rm test}=1000$ test samples as described above; we solve the learning problem~\eqref{eq:GNE-learning-problem} with the three different NI-based loss functions defined in~\eqref{eq:NI-loss-sum},~\eqref{eq:NI-loss-max,0}, and~\eqref{eq:NI-loss-soft-max,0}, for different values of the weighting parameter $\beta$ in~\eqref{eq:smooth-max}
and $\ell_2$-regularization coefficients equal to $10^{-8}$ in all cases. 
We use feedforward neural networks with 2 hidden layers of 30, 20 neurons, and swish activation function as parametric models for the value functions $\hat J_i$, which are only used for training the GNE model, and 2 hidden layers of 10, 5 neurons, linear bypass, and ReLU activation function for the two-output solution mapping $\hat x$ (used for training and evaluation).  The results are reported in Table~\ref{tab:GNE_LQ} in terms of the average $\bar v$ and worst-case $v_{\rm max}$ constraint violations, and MSE$_{\rm BR}$. Training time ranges between 29.1 and 37.5\,s, with 10.78\,s employed to train the value-function models.

\begin{table}[h!]
\begin{center}
\begin{tabular}{rcccc}
\hline
$\beta$ & loss & $\overline{v}$ & $v_{\max}$ & MSE$_{\rm BR}$ \\
\hline
1 & \eqref{eq:NI-loss-sum} & 3.71e-01 & 7.18e-01 & 7.11e-01 \\
10 & \eqref{eq:NI-loss-sum} & 2.91e-01 & 3.39e-01 & 6.60e-01 \\
100 & \eqref{eq:NI-loss-sum} & 4.14e-06 & 3.05e-03 & 2.50e-02 \\
1000 & \eqref{eq:NI-loss-sum} & 3.29e-06 & 2.11e-03 & 2.32e-02 \\
\hline
1 & \eqref{eq:NI-loss-max,0} & 8.81e-04 & 2.74e-02 & 2.34e-03 \\
10 & \eqref{eq:NI-loss-max,0} & 3.33e-05 & 1.33e-02 & 2.55e-03 \\
100 & \eqref{eq:NI-loss-max,0} & 3.00e-05 & 6.13e-03 & 9.00e-03 \\
1000 & \eqref{eq:NI-loss-max,0} & 2.37e-05 & 6.99e-03 & 4.66e-02 \\
\hline
1 & \eqref{eq:NI-loss-soft-max,0} & 3.93e-03 & 5.94e-02 & 1.59e-03 \\
10 & \eqref{eq:NI-loss-soft-max,0} & 1.65e-03 & 1.50e-02 & \textbf{9.92e-04} \\
100 & \eqref{eq:NI-loss-soft-max,0} & 9.32e-06 & 6.76e-03 & 1.17e-02 \\
1000 & \eqref{eq:NI-loss-soft-max,0} & \textbf{1.49e-06} & \textbf{1.49e-03} & 4.64e-02 \\
\hline
\hline
\end{tabular}
\end{center}
\caption{Linear-quadratic GNEP defined in~\eqref{eq:GNE_LQ}.
}
\label{tab:GNE_LQ}
\end{table}
The table shows that the best results are achieved by using the smooth-max loss~\eqref{eq:NI-loss-soft-max,0},
and that, as expected, $\beta$ trades off feasibility and optimality of the best responses. Note that, as apparent in
the first rows of the table, small values of $\beta$ allow the solution to violate the constraints and,
due to the loss~\eqref{eq:NI-loss-sum}, the solution model becomes super-optimal, with resulting large deviations from (constrained) best responses. From now on, we will use the smooth-max loss~\eqref{eq:NI-loss-soft-max,0} with $\beta=100$ in all the following examples
to compromise between feasibility and optimality. The time to evaluate the solution model $\hat x(p,\theta_2^\star)$ for a given $p$ is approximately $0.08$\,$\mu$s.

Next, we apply the learning method to a more challenging linear-quadratic GNEP with $N$ agents, $N\in\{2,3,4\}$, with $n_i=2$ variables each, $n_p$ parameters ranging in the interval $[-1,1]$, and $m=20N$ shared inequality constraints in addition to upper and lower bounds $-1\leq x\leq 1$. We generate $1000 n_p$ training, validation, and test samples as described above.
The results obtained with neural networks with the same architecture except $15(N+n_p),10(N+n_p)$ neurons in the hidden layers of the value function models and $5(N+n_p),3(N+n_p)$ neurons in the hidden layers of the solution mapping model,
under $\ell_2$-regularization coefficients equal to $10^{-4}$, are reported in Table~\ref{tab:GNE_LQ_N_np}.
Training time ranges between $254.9$ and $904.1$\,s, evaluating the solution model $\hat x(p,\theta_2^\star)$ takes between $0.12$ and $0.18$\,$\mu$s.

\begin{table}[h!]
\setlength{\tabcolsep}{4.5pt}
\begin{center}
\begin{tabular}{rrrcccc}
\hline
$N$ & $n$ & $n_g$ & $n_p$ & $\overline{v}$ & $v_{\max}$ & MSE$_{\rm BR}$ \\
\hline
2 & 4 & 48 & 2 & 9.40e-04 & 8.91e-02 & 6.00e-02 \\
2 & 4 & 48 & 3 & 7.25e-03 & 1.03e-01 & {2.79e-02} \\
2 & 4 & 48 & 4 & 1.23e-02 & 1.69e-01 & 1.26e-01 \\
\hline
3 & 6 & 72 & 2 & {2.12e-04} & 2.71e-02 & 2.88e-02 \\
3 & 6 & 72 & 3 & 4.61e-03 & 1.12e-01 & 6.61e-02 \\
3 & 6 & 72 & 4 & 1.32e-02 & 2.07e-01 & 1.72e-01 \\
\hline
4 & 8 & 96 & 2 & 2.75e-04 & {2.15e-02} & 5.94e-02 \\
4 & 8 & 96 & 3 & 4.61e-03 & 7.19e-02 & 7.43e-02 \\
4 & 8 & 96 & 4 & 1.40e-02 & 1.68e-01 & 3.17e-01 \\
\hline
\end{tabular}
\end{center}
\caption{Random linear-quadratic GNEPs with $N$ agents, $n_i=2$ variables each, 
$20N$ shared inequality constraints, and $n_p$ parameters.}
\label{tab:GNE_LQ_N_np}
\end{table}

\subsection{Non-Monotone Linear-Quadratic GNEP}
To highlight the results of Proposition~\ref{prop:cont}, we consider the following non-monotone linear-quadratic GNEP defined by
\begin{equation}
\ba{rll}
J_{1}(x_{1}, x_{2}, p) &= \frac{1}{2} x_{1}^2 + 2 x_{2} x_{1} + 15 x_{1} \\
J_{2}(x_{1}, x_{2}, p) &= \frac{1}{2} x_{2}^2 + 3 x_{1} x_{2} + \left(3 + 0.4\, p\right) x_{2} \\
\st & x_1+x_2 \leq p - 0.3 \\
& -1\leq x_1,x_2\leq 1,\quad -1\leq p \leq 1.
\ea
\label{eq:GNE_LQ_non-monotone}
\end{equation}
The pseudogradient matrix $G = \left[\begin{smallmatrix}1&2\\3&1\end{smallmatrix}\right]$ has
a symmetric part $G_s = \left[\begin{smallmatrix}1&2.5\\2.5&1\end{smallmatrix}\right]$ with eigenvalues $3.5$ and $-1.5$, so the game is \emph{not} monotone. This game satisfies the assumptions of Proposition~\ref{prop:cont}, which can be checked by inspecting its continuous v-GNE solution map
\[
  x^\star(p) =
  \begin{cases}
    [-1\  0.7+p]^\top & p\in[-1,\,-0.5] \\
    [-1\ -0.4\,p]^\top & p\in[-0.5,\,1].
  \end{cases}
\]
We generate $1000$ training, validation, and test samples as described above. We train value function models with $10,5$ neurons and tanh activation function, solution models with $5,3$ neurons and leaky-ReLU activation function,
and $\ell_2$-regularization coefficients equal to $10^{-8}$.
The resulting solution model achieves an average best-response error of $0.00423$ and no constraint violation on the test set. The total training time is 13.7\,s (9.9\,s to train the value-function models). 

\subsection{Quadratically constrained quadratic GNEPs}
We want to find approximate GNE solutions for the following quadratically constrained quadratic GNEP 
\begin{equation}
    \ba{rll}
x_i &\in \arg\min_{x_i} & 0.5 x^\top Q_i x + c_i^\top x + (F_i p)^\top x \\
&\st & A x \leq b + S p \\
&& 0.5(x-x^c_{j})^\top Q^c_{j}(x-x^c_{j}) \leq b^c_{j} + (s^c_{j})^\top p\\
&& -1\leq x_i \leq 1
\ea
\label{eq:GNE_QCQP}
\end{equation}
with $N=3$ agents, $n_i=1$ variable each, $n_p=2$ parameters
$-1\leq p_1,p_2\leq 1$, shared linear constraints,
defined by $A\in\rr^{8\times 3}$, $b\in\rr^{8}$, and $S\in\rr^{8\times 2}$,
and quadratic constraints, defined by positive definite matrices $Q^c_j\in\rr^{3\times 3}$, 
$x^c_j\in\rr^3$, $b^c_j\in\rr$, and $s^c_j\in\rr^2$ for $j=1,\ldots,4$.
We generate 1000 training, validation, and test samples as described above and train the models with the same architecture and regularization coefficients as for the LQ-GNE~\eqref{eq:GNE_LQ}. 
The resulting solution model $\hat x(p,\theta_2^\star)$ achieves a mean constraint violation of $1.1785\cdot 10^{-4}$ and a worst-case constraint violation of $4.371\cdot 10^{-2}$ on the test set, with an average best-response error of $0.132$. The total training time is 47.6\,s (33.1\,s to train the 3 value-function models). The CPU time to evaluate the resulting GNE prediction $\hat x(p,\theta_2^\star)$ for a given $p$ is approximately 0.114\,$\mu$s. 

\subsection{Nonlinear GNEPs}
We consider a variant of the internet switching model described in~\cite[Ex. A.1]{FK09}:
\begin{equation}
    \begin{aligned}
    &J_i(x,p) = \frac{-x_i}{\sum_{j=1}^N x_j}\left(1-\frac{\sum_{j=1}^N x_j}{p}\right)\\
    &\quad\quad x_1+\ldots+x_N\leq p,\quad x_i \geq \ell, \qquad i=1,\ldots,N
\end{aligned}
\label{eq:A1-FK09}
\end{equation}
where $\ell=0.01$ and $p\in[N\ell,2]$ is treated as the parameter of the game. 
For this problem, we can get the analytical expression for the best response $x_i^\star$ as follows. 
Let $a_i:=\sum_{j\neq i}x_j$ ($a_i>0$ as $\ell>0$), set $J_i(x,p)=-\frac{x_i}{x_i+a_i}+\frac{x_i}{p}$, and get the derivatives
$\frac{\partial J_i}{\partial x_i}=-\frac{a_i}{(x_i+a_i)^2}+\frac{1}{p}$,
and $\frac{\partial^2 J_i}{\partial x_i^2}=\frac{2a_i}{(x_i+a_i)^3}\geq 0$,
which show that $J_i$ is strictly convex in $x_i$ over the feasible set for any fixed feasible $x_{-i}$ and $p$. By zeroing the gradient, we get the unconstrained minimizer
$\widehat{x}_i=\sqrt{pa_i}-a_i$, $\widehat{x}_i\leq p-a_i$ since $a_i<p$. Hence, the best response is given by $x_i^\star=\max\{\ell,\widehat{x}_i\}$, $\forall i=1,\ldots,N$. 

To generate feasible samples, we take 
$p_k$ uniformly at random in $[N\ell,2]$, and set $x_{ki}=\ell + (p_k - N\ell)w_{ki}$, where $w_k\in\rr^{N+1}$ is sampled from the flat Dirichlet distribution ($\sum_{i=1}^{N+1}w_{ki}=1$, $\forall k$).

We consider $N$ agents, $N\in\{2,\ldots,10\}$, and generate 
$M=N_{\rm val}=1000(N-1)$ training and validation samples, and $N_{\rm test}=1000$ test samples as described above.
We train value function models with $(10,5)$ neurons and swish activation function, 
solution models with $(2N, 2N)$ neurons and ReLU activation function,
and $\ell_2$-regularization coefficients equal to $10^{-8}$.
The results are shown in Table~\ref{tab:NL-GNE}.

\begin{table}
\begin{center}
\begin{tabular}{ccccc}
\hline
$N$ & $\overline{v}$ & $v_{\max}$ & $\overline{\epsilon}^2_{\rm BR}$ & training time (s) \\
\hline
2 & 0 & 0 & {6.06e-06} & {17.1} \\
3 & 0 & 0 & 9.11e-05 & 24.8 \\
4 & 0 & 0 & 2.87e-05 & 51.4 \\
5 & 0 & 0 & 9.52e-05 & 81.4 \\
6 & 0 & 0 & 2.01e-04 & 124.4 \\
7 & 0 & 0 & 9.72e-05 & 170.9 \\
8 & 0 & 0 & 1.82e-04 & 234.5 \\
9 & 0 & 0 & 3.34e-04 & 275.5 \\
10 & 0 & 0 & 1.78e-03 & 348.2 \\
\hline
\end{tabular}
\end{center}
\caption{Nonlinear GNEP problem~\eqref{eq:A1-FK09}: results for different numbers $N$ of agents.}
\label{tab:NL-GNE}
\end{table}
The table shows that the proposed learning method is able to find accurate GNE models for this problem, with zero constraint violation and small best-response errors. 

As a second example of nonlinear GNEP, we consider the following problem with $N$ agents, $n_i=2$ variables each, $n_p=2$ parameters, $-1\leq p\leq 1$, with objectives
\begin{equation}
    \begin{aligned}
J_i(x,p) = &(i+1)\Big(\sum_{j=1}^N x_j\Big)^2 + p_1(N-i)\sum_{j\neq i}x_j\sum_{j=1}^N x_j \\
    &+ p_2 \tanh\Big(i\sum_{j=1}^N x_j^2\Big)
    \end{aligned}
\label{eq:nonconvex-GNE}
\end{equation}
subject to bound constraints $-1\leq x\leq 1$. We generate 5000 training, 1000 validation, and 1000 test samples
by evaluating optimal value function data $\bar J_{ik}$ for each agent $i$ and pair $(x_k,p_k)$ by solving a box-constrained nonlinear optimization problem using L-BFGS-B. We train value function models with $(10,5)$ neurons and swish activation function, solution models with $(10,5)$ neurons and ReLU activation function, and $\ell_2$-regularization coefficients equal to $10^{-8}$. The resulting solution model achieves an average best-response error of $9.2313\cdot 10^{-2}$ on the test set. The total training time is 73.9\,s (46.7\,s to train the value-function models). The CPU time to evaluate the resulting GNE prediction $\hat x(p,\theta_2^\star)$ for a given $p$ is approximately 0.43\,$\mu$s.

\subsection{Approximate multiparametric programming}
We consider the special case of finding approximate solutions to multiparametric programming
problems, corresponding to the single-agent case described in Section~\ref{sec:single-agent}.
We test two randomly generated problems: a multiparametric quadratic programming (mpQP) problem with
$n_x=10$ decision variables, $n_p=6$ parameters $p_i\in[0,1]$, $i=1,\ldots,s$, and $m=50$ linear constraints; and a multiparametric quadratically constrained QP (mpQCQP) problem, which does not admit an exact explicit representation,  obtained by adding 20 random convex quadratic constraints to the mpQP.
After collecting 5000 training, validation, and test value-function samples, Problem~\eqref{eq:GNE-learning-problem-single-agent} is solved using a two-layer ReLU network with linear bypass and (30,20) neurons. Such a model is warm-started by minimizing the regularized mean-squared error on optimal solutions available
in the training set. We define the average relative error as
\[
\bar e_{\rm rel}=\frac{1}{N_{\rm test}}\sum_{k=1}^{N_{\rm test}}\frac{J(\hat x(p_k,\theta_2^\star))-J(x^\star(p_k),p_k)}{10^{-10}+|J(x^\star(p_k),p_k)|}.
\]
Results are summarized in Table~\ref{tab:mpp}. For comparison, we also solve the mpQP problem using the multiparametric QP solver~\cite{AA24}, which provides an exact solution with 1627 critical regions.

\begin{table}[h]
\centering
\begin{tabular}{lrr}
\hline
 & mpQP & mpQCQP \\
\hline
Avg.\ relative error $\bar e_{\rm rel}$ & 0.000807 & 0.00218 \\
Avg.\ constraint violation & $0.00306$ & $0.00377$ \\
Worst-case constraint violation & $0.171$ & $0.245$ \\
Training time (s) & 130.3 & 269.7 \\
CPU time ($\hat x$) & $0.16$ $\mu$s & $0.18$ $\mu$s \\
CPU time ($x^\star$) & $0.45$ ms& $2.18$ ms\\
\hline
\end{tabular}
\caption{Results for approximate multiparametric programming.}
\label{tab:mpp}
\end{table}

\section{Conclusions}\label{sec:conclusions}
We proposed a learning-based framework for approximating solution mappings of multiparametric GNEPs and standard multiparametric programming problems, in particular using neural network models of the GNE solution. To justify the use of continuous models, we also established new sufficient conditions for the existence of a continuous parametric v-GNE selection under a strong variational stability assumption. 

A possible extension of our approach to impose feasibility of the predicted solution is to incorporate the projection step~\eqref{eq:projection} during training by using differentiable optimization layers, and explore the benefits of using the obtained approximate GNE models to warm-start GNE solvers.
A future research direction is to further leverage sensitivity properties of variational inequalities to focus on learning specific v-GNE selections.

\appendix

\subsection*{Proof of Lemma~\ref{lem:Assum}}

\begin{proof}
Condition 1) implies that  the solution $x^\star(q)$ is uniformly bounded over $q\in U_p$. Let $X$ be a compact set containing $x^\star(q)$, $q\in U_p$.
Using condition 2), we obtain
\begin{align}\label{eq:Lip}
    g_i(p,x) = g_i(q,x)+\bigl(g_i(p,x)-g_i(q,x)\bigr)\cr
\le
|g_i(p,x)-g_i(q,x)|
\le
L_{i}(x)\|p-q\|.
\end{align} 
Therefore,
\[
[g_i(p,x)]_+ \le L_{i}(x)\|p-q\|,
\qquad i=1,\dots,m.
\]
Next we use the error-bound condition, which holds due to the Slater's qualification one~\cite[Prop. 6.1.4]{FaccPang1}. This condition implies that for each $x\in X$,
\[
\operatorname{dist}(x,X(p)) \le \kappa_{X,p} \sum_{i=1}^m [g_i(p,x)]_+.
\]
Summing~\eqref{eq:Lip} over $i=1,\dots,m$ and using the error-bound condition above, we conclude that for any  $x\in X$, 
\[
\operatorname{dist}(x,X(p)) \le \kappa_{X,p} \sum_{i=1}^m [g_i(p,x)]_+
\le \kappa_{X,p} \sum_{i=1}^m L_{i}(x)\|p-q\|.
\]
Thus,
\[
\operatorname{dist}(x,X(p)) \le c_p\|p-q\|,
\]
where
$c_p = \kappa_{X,p} \sum_{i=1}^m \max_{x\in X}L_{i}(x)$.
The final estimate follows from the identity
\[
\operatorname{dist}(x^\star(q),X(p))
=
\bigl\|x^\star(q)-\Pi_{X(p)}(x^\star(q))\bigr\|
\]
and the fact that $x^\star(q)\in X$.
\end{proof}

\subsection*{Proof of Proposition~\ref{prop:cont}}
\begin{proof}
Let $x^\star(p)$ and $x^\star(q)$ be the solutions to the games $\Gamma_p$ and $\Gamma_q$, $p,q\in\mathcal{P}$, respectively. 
Due to Assumption~\ref{assum:strVS} applied to the game $\Gamma_q$, 
\begin{align}\label{eq:Vst1}
    \langle F(\Pi_{X(q)}[x^\star(p)],q), \,&\Pi_{X(q)}[x^\star(p)]-x^\star(q)\rangle\cr& \ge \nu\|\Pi_{X(q)}[x^\star(p)]-x^\star(q)\|^2.
\end{align}
Moreover, since $x^\star(p)$ is a v-GNE in $\Gamma_p$,
\begin{align}\label{eq:vGNE1}
    \langle F(x^\star(p),p), \,&\Pi_{X(p)}[x^\star(q)]-x^\star(p)\rangle \ge 0.
\end{align}
Taking into account that 
\begin{align*}
   & \langle F(x^\star(p),p), \,\Pi_{X(p)}[x^\star(q)]-x^\star(p)\rangle \cr= &\langle F(x^\star(p),p), \,x^\star(q) - \Pi_{X(q)}[x^\star(p)]\rangle\cr
   &+ \langle F(x^\star(p),p), \, \Pi_{X(p)}[x^\star(q)] - x^\star(q)\rangle\cr
   &+\langle F(x^\star(p),p), \, \Pi_{X(q)}[x^\star(p)] - x^\star(p)\rangle
\end{align*}
and by summing up~\eqref{eq:Vst1} and~\eqref{eq:vGNE1}, we obtain
\begin{align}
   &\nu\|\Pi_{X(q)}[x^\star(p)]-x^\star(q)\|^2\cr
   &\le \langle F(\Pi_{X(q)}[x^\star(p)],q) - F(x^\star(p),p), \,\Pi_{X(q)}[x^\star(p)]-x^\star(q)\rangle\cr
   &\quad+ \langle F(x^\star(p),p), \, \Pi_{X(p)}[x^\star(q)] - x^\star(q)\rangle\cr
   &\quad+\langle F(x^\star(p),p), \, \Pi_{X(q)}[x^\star(p)] - x^\star(p)\rangle\cr
   &\le \|F(x^\star(p),q) - F(x^\star(p),p)\|\|\Pi_{X(q)}[x^\star(p)]-x^\star(q)\|\cr
   &\quad+\|F(x^\star(p),q) - F(\Pi_{X(q)}[x^\star(p)],q)\|\|\Pi_{X(q)}[x^\star(p)]-x^\star(q)\| \cr
   &\quad+ \| F(x^\star(p),p)\| \, \|\Pi_{X(p)}[x^\star(q)] - x^\star(q)\|\cr
   &\quad+\| F(x^\star(p),p)\| \, \|\Pi_{X(q)}[x^\star(p)] - x^\star(p)\|.
\end{align}
 Due to continuity of $F$, Assumptions~\ref{assum:Proj} and~\ref{assum:Proj1}, the last three terms in the inequality above, namely,
 \[\|F(x^\star(p),q) - F(\Pi_{X(q)}[x^\star(p)],q)\| \|\Pi_{X(q)}[x^\star(p)]-x^\star(q)\|\]
 \[ \| F(x^\star(p),p)\| \, \|\Pi_{X(p)}[x^\star(q)] - x^\star(q)\|\] 
 \[\| F(x^\star(p),p)\| \, \|\Pi_{X(q)}[x^\star(p)] - x^\star(p)\|\]
 tend to 0 as $q\to p$.
   Let $\bar x^\star$ be a limit point of $x^\star(q)$ (which exists due to uniform boundedness of $x^\star(q)$) over some subsequence $q_z$. Thus, using again Assumption~\ref{assum:Proj1}, we obtain as $z\to \infty$, 
 \begin{align}\label{eq:limit}
   \nu\|x^\star&(p) - \bar x^\star\|^2 \cr&\hspace*{-1em}\le \lim_{z\to \infty}\|F(x^\star(p),q_z) - F(x^\star(p),p)\|\|x^\star(p)-\bar x^\star\|
\end{align}
which, together with continuity of $F(x,p)$ in $p$, implies that $\bar x^\star = x^\star(p)$. Thus, any limit point of $x^\star(q)$ is $x^\star(p)$. Hence, $\lim_{q\to p} x^\star(q) = x^\star(p)$. 
\end{proof}

\end{document}